\documentclass[sn-mathphys-num, lin-no]{sn-jnl}% Math and Physical Sciences Numbered Reference Style

%\documentclass[sn-mathphys-num, pdflatex]{sn-jnl}% Math and Physical Sciences Numbered Reference Style 
 
%%\documentclass[sn-mathphys-ay]{sn-jnl}% Math and Physical Sciences Author Year Reference Style
%%\documentclass[sn-aps]{sn-jnl}% American Physical Society (APS) Reference Style
%%\documentclass[sn-vancouver,Numbered]{sn-jnl}% Vancouver Reference Style
%%\documentclass[sn-apa]{sn-jnl}% APA Reference Style 
%%\documentclass[sn-chicago]{sn-jnl}% Chicago-based Humanities Reference Style

%%%% Standard Packages
%%<additional latex packages if required can be included here>

\usepackage{graphicx}%
\usepackage{multirow}%
\usepackage{amsmath,amssymb,amsfonts}%
\usepackage{amsthm}%
\usepackage{mathrsfs}%
\usepackage[title]{appendix}%
\usepackage{xcolor}%
\usepackage{textcomp}%
\usepackage{manyfoot}%
\usepackage{booktabs}%
\usepackage{algorithm}%
\usepackage{algorithmicx}%
\usepackage{algpseudocode}%
\usepackage{listings}%
%%%%

%%%%%=============================================================================%%%%
%%%%  Remarks: This template is provided to aid authors with the preparation
%%%%  of original research articles intended for submission to journals published 
%%%%  by Springer Nature. The guidance has been prepared in partnership with 
%%%%  production teams to conform to Springer Nature technical requirements. 
%%%%  Editorial and presentation requirements differ among journal portfolios and 
%%%%  research disciplines. You may find sections in this template are irrelevant 
%%%%  to your work and are empowered to omit any such section if allowed by the 
%%%%  journal you intend to submit to. The submission guidelines and policies 
%%%%  of the journal take precedence. A detailed User Manual is available in the 
%%%%  template package for technical guidance.
%%%%%=============================================================================%%%%

%% as per the requirement new theorem styles can be included as shown below
\theoremstyle{thmstyleone}%
%  meant for continuous numbers
%%\newtheorem{theorem}{Theorem}[section]% meant for sectionwise numbers
%% optional argument [theorem] produces theorem numbering sequence instead of independent numbers for Proposition
% 
%%\newtheorem{proposition}{Proposition}% to get separate numbers for theorem and proposition etc.

\theoremstyle{thmstyletwo}%
\newtheorem{remark}{Remark}%
\newtheorem{lemma}{Lemma}%
\newtheorem{corollary}{Corollary}%

\theoremstyle{thmstylethree}%

\raggedbottom
%%\unnumbered% uncomment this for unnumbered level heads

\begin{document}

\title[Article Title]{A Deterministic Algorithm of Quasi-Polynomial Complexity for Clipped Cubes Volume Approximation}

%%=============================================================%%
%% GivenName	-> \fnm{Joergen W.}
%% Particle	-> \spfx{van der} -> surname prefix
%% FamilyName	-> \sur{Ploeg}
%% Suffix	-> \sfx{IV}
%% \author*[1,2]{\fnm{Joergen W.} \spfx{van der} \sur{Ploeg} 
%%  \sfx{IV}}\email{iauthor@gmail.com}
%%=============================================================%%

\author{\fnm{Marius} \sur{Costandin}}\email{costandinmarius@gmail.com}
%
%\author[2,3]{\fnm{Second} \sur{Author}}\email{iiauthor@gmail.com}
%\equalcont{These authors contributed equally to this work.}
%
%\author[1,2]{\fnm{Third} \sur{Author}}\email{iiiauthor@gmail.com}
%\equalcont{These authors contributed equally to this work.}
%
%\affil*[1]{\orgdiv{Department}, \orgname{Organization}, \orgaddress{\street{Street}, \city{City}, \postcode{100190}, \state{State}, \country{Country}}}
%
%\affil[2]{\orgdiv{Department}, \orgname{Organization}, \orgaddress{\street{Street}, \city{City}, \postcode{10587}, \state{State}, \country{Country}}}
%
%\affil[3]{\orgdiv{Department}, \orgname{Organization}, \orgaddress{\street{Street}, \city{City}, \postcode{610101}, \state{State}, \country{Country}}}

%%==================================%%
%% Sample for unstructured abstract %%
%%==================================%%

\abstract{We give a deterministic method of quasi-polynomial complexity to approximate the volume of the intersection of the unit hypercube with two specific sets. The method can actually be applied (without losing the quasi-polynomial complexity) to compute the volume of the hypercube intersected with a fixed number of sets, described by equations of the form $\sum_{q=1}^n a_q(x_q) \leq b$, where $a_q : \mathbb{R} \to \mathbb{R}$ are polynomial functions and $b \in \mathbb{R}$. Note that the resulting sets are not necessarily convex. This type of equations describe, among others, half-spaces, balls and ellipsoids. We give detailed convergence and complexity analysis for the case in which the unit hypercube is clipped by balls of arbitrary radius but with centers whom distance to the unit hypercube is greater than $1$ (one). }

\keywords{non-convex optimization, volume approximation, computational complexity}

%%\pacs[JEL Classification]{D8, H51}

%%\pacs[MSC Classification]{35A01, 65L10, 65L12, 65L20, 65L70}

\maketitle
\tableofcontents

%% main text
\section{Introduction} \label{intro}

In this paper we study the computation of volumes in higher dimensions. In particular we are concerned with the approximation of the volume of the unit hypercube intersected with one or two balls or intersected with a ball and a half space. We give a method whose complexity class is the same for balls, half-spaces or any other set which ca be described by the equation $\sum_{k=1}^n a_k(x_k) \leq b$ for some $b \in \mathbb{R}$ and $a_k(\cdot) $ are real univariate polynomials. 

Note that being able to compute volumes of such sets allows one to solve difficult optimization problems. For instance consider the problem
\begin{align}
\max_{x \in \mathcal{U} \cap H} \|x - C_0\|^2
\end{align}  where $\mathcal{U}$ is the unit hypercube and $\mathcal{H}$ is a half-space. According to \cite{sahni_comp}, this problem is NP-Complete, since the Subset Sum problem can be reduced to it. However, it can be easily solved using a volume computation oracle as follows:

\begin{align}\label{E2f}
\max_{x \in \mathcal{U} \cap H} \|x - C_0\| = \min \{R > 0 | \text{vol}(\mathcal{U} \cap \mathcal{H} \cap \mathcal{B}(C_0,R)) \geq \text{vol} (\mathcal{U} \cap \mathcal{H}) \} 
\end{align} Indeed, the radius of the smallest ball centered in $C_0$ which includes $\mathcal{U} \cap \mathcal{H}$ is the largest distance from a point in $\mathcal{U} \cap \mathcal{H}$ to $C_0$. One can solve (\ref{E2f}) to $\epsilon$ precision with at most $\mathcal{O}\left(\log\frac{1}{\epsilon} \right)$ calls to such a volume computation oracle.

Deterministic algorithms in literature for computing volumes of clipped hypercubes are mainly concerned with the intersection of a hypercube with a single half-space or a fixed number of half-spaces, see \cite{clip_hchs}, \cite{clip_hchs1}, \cite{jim_law} and the references therein. 

We also mention the celebrated result in \cite{cvx_rdm} which gives a randomized polynomial time approximation scheme for the computation of the volume of a convex body. 

Throughout the paper we denote by $\mathcal{B}(C_0,r)$ the ball centered in $C_0 \in \mathbb{R}^n$ of radius $r$. We denote by $1_{n \times 1}$ the element of $\mathbb{R}^n$ whose entries are all $1$. We use $\text{poly}(n)$ in complexity classes to refer to a polynomial in $n$. Finally, by $d(x, \mathcal{U})$ we denote the distance from the point $x$ to the convex set $\mathcal{U}$.   

The outline of the paper is the following:

\begin{enumerate}
\item In Subsection \ref{AH} we give an original approximation of the Heaviside step function in terms of an "incomplete" Gauss integral. Our approximation is useful when computing volumes because by changing the  integration order, the resulting integrand can be approximated using globally convergent McLaurin series.
\item In Subsection \ref{VAE} we show that it is sufficient to retain only a polynomial number of elements in a McLaurin series to attain a desired accuracy of a specific approximation. We prove this by showing that the series reminder in Lagrange form vanishes in certain conditions.
\item In Subsection \ref{EI1} and \ref{EI2} we give an original recurrent method which at each step obtains an $n-$integral as a sum of a polynomial number of products of $\frac{n}{2}-$integrals. This allows a quasi-polynomial overall complexity, since we end up with univariate integrals over some polynomial functions, which can be computed exactly.
\item In Subsection \ref{ErAn} we give an original upper bound on the quadrature error using Riemann sums and the magnitude of the derivative of the function to be integrated.  
\end{enumerate}

\section{Main Results}

Let $n \in \mathbb{N}$, $a_i (\cdot), c_i(\cdot) $ be real univariate polynomials for all $i \in \{1, \hdots, n\}$ and $b,d \in \mathbb{R}$ fixed. Define the sets:
\begin{align}
\mathcal{S}_1 = \left\{x \in \mathbb{R}^n | \sum_{i=1}^n a_i(x_i) \leq b \right\} \hspace{0.5cm} \mathcal{S}_2 = \left\{x \in \mathbb{R}^n | \sum_{i=1}^n c_i(x_i) \leq d \right\}
\end{align} and the unit hypercube $\mathcal{U} = \{ x \in \mathbb{R}^n | 0 \leq x_i \leq 1, \forall i \in \{1, \hdots, n\}\}$. It is clear that
\begin{align}
\max_{x \in \mathcal{U}} \left| \sum_{i=1}^n a_i(x_i) - b \right| \in \mathcal{O}(\text{poly}(n))  \hspace{0.5cm} \max_{x \in \mathcal{U}} \left| \sum_{i=1}^n c_i(x_i) - d \right| \in \mathcal{O}(\text{poly}(n))  
\end{align}

Let $\rho_0: \mathbb{R}^n \to \mathbb{R} $ and $\rho_1: \mathbb{R}^n \to \mathbb{R} $ with 
\begin{align}
\rho_0(x) = b - \sum_{i = 1}^n a_i(x_i) \hspace{0.5cm} \rho_1(x) = d - \sum_{i=1}^n c_i(x_i)
\end{align}

In the following we are concerned with computing the volume of:
\begin{align}
\mathcal{L} = \mathcal{U} \cap \mathcal{S}_1 \cap \mathcal{S}_2
\end{align}

We start wih: 
\begin{align}\label{E9}
\text{vol}(\mathcal{L}) & =\int_{-\infty}^{\infty} dx_1 \hdots \int_{-\infty}^{\infty} dx_n \cdot  H(x_1) \cdot H(1-x_1)\cdot \hdots \cdot H(x_n) \cdot \nonumber \\
& \cdot H(1-x_n) \cdot  H(\rho_0(x)) \cdot H(\rho_1(x)).
\end{align} 

\subsection{Approximation of the Heaviside step function} \label{AH}

In (\ref{E9}) we use the Heaviside step function $H(x)$ to form the indicator function of $\mathcal{L}$. A well known parameterized approximation of this function is the logistic sigmoid function
\begin{align}\label{E10}
H(t) \approx L_K(t) := \frac{e^{K \cdot t}}{1 + e^{K \cdot t}}  \hspace{0.5cm} \forall t\in \mathbb{R}
\end{align} for any $ K > 0$, a parameter which controls the sharpness of the approximation.  Analyzing the sigmoid function $L_K(t)$ one obtains:
\begin{align}
\frac{d}{dK} L_K(t) &= \frac{e^{K \cdot t}}{1 + e^{K \cdot t}} \cdot t - \left( \frac{e^{K\cdot t}}{1 + e^{K\cdot t}}\right)^2\cdot t \nonumber \\
& = t \cdot \frac{e^{K\cdot t}}{1 + e^{K\cdot t}} \cdot \left( 1 - \frac{e^{K\cdot t}}{1 + e^{K\cdot t}} \right) \nonumber \\
& = t \cdot \frac{1}{1 + e^{-K \cdot t}} \cdot \frac{1}{ 1 + e^{K \cdot t}} = t \cdot \Phi_L(K \cdot t)
\end{align} where 
\begin{align}
\Phi_L(t) = \frac{1}{2 + e^{t} + e^{-t}} 
\end{align} One can see in Figure \ref{Fig1} the plot of this function

\begin{figure}[h]
\centering
\includegraphics[scale = 0.5]{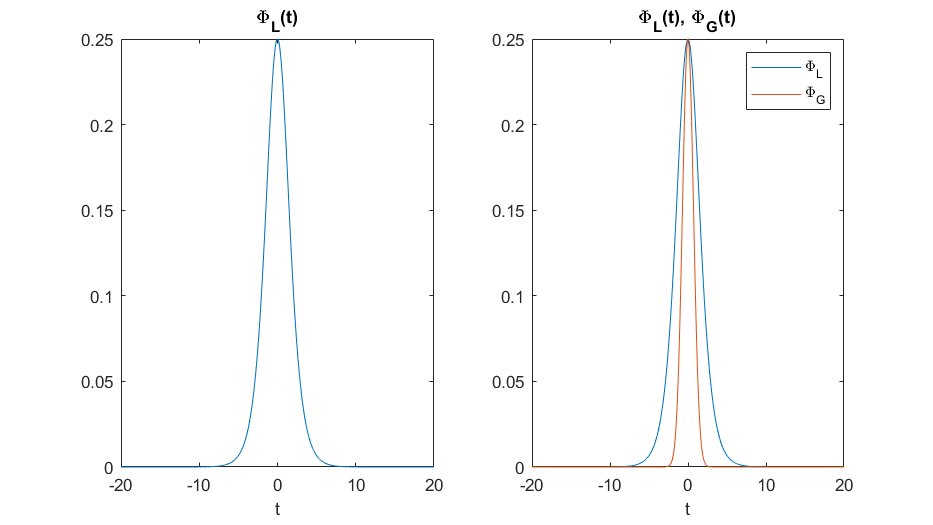}
\caption{Plot of a) $\Phi_L(t)$ b) $\Phi_L(t)$ and $\Phi_G(t)$}
\label{Fig1}
\end{figure}

One naturally asks whether it is possible to obtain a step function approximation if this function $\Phi_L(\cdot)$ is replaced by a similar shaped function, say $\Phi_G(\cdot)$. We choose $\Phi_G(t) = \frac{1}{4}\cdot e^{-t^2}$ and obtain a step function approximation as follows:
\begin{align}\label{E13}
H_K(t) = \frac{1}{2} + \frac{1}{\sqrt{\pi}} \cdot  \int_{0}^K t \cdot e^{-(t \cdot y)^2} \cdot dy
\end{align} Indeed, $H_K(t)$ is a parameterized approximation of the step function since $\int_{-\infty}^{\infty} e^{-(t\cdot y)^2} \cdot dy = \frac{\sqrt{\pi}}{|t|}$ hence
\begin{align}\label{E14}
\lim_{K \to \infty} H_K(t) = \frac{1}{2} + \frac{1}{2} \cdot \frac{t}{\sqrt{\pi}} \cdot \frac{\sqrt{\pi}}{|t|} = \frac{1 + \frac{t}{|t|}}{2} = \begin{cases} 0, \hspace{0.2cm} &t<0 \\ \frac{1}{2}, & t = 0 \\ 1, & t > 0\end{cases}
\end{align}

See Figure \ref{Fig2} for a plot of the function $H_K(t)$.

\begin{figure}[h]
\centering
\includegraphics[scale = 0.5]{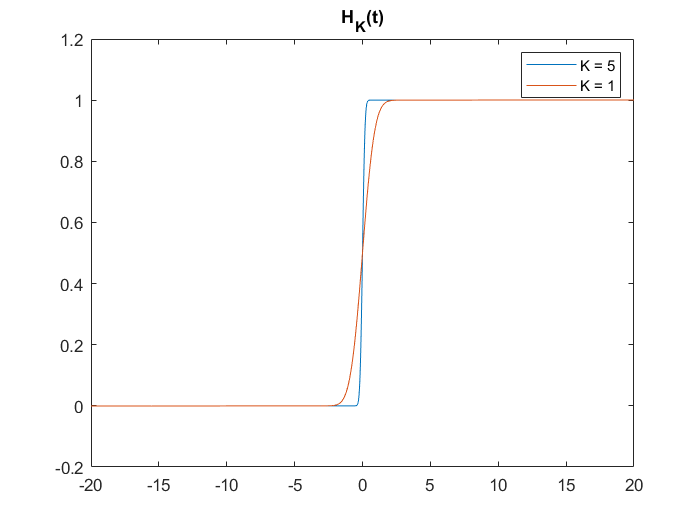}
\caption{Plot of $H_K(t)$ for various values of $K$}
\label{Fig2}
\end{figure}

From (\ref{E9}), (\ref{E13}) and (\ref{E14}) one gets
\begin{align}
\text{vol}(\mathcal{L}) &= \lim_{K \to \infty} \int_{-\infty}^{\infty} dx_1 \hdots \int_{-\infty}^{\infty} dx_n \cdot  H(x_1) \cdot H(1-x_1)\cdot \hdots \cdot H(x_n) \cdot \nonumber \\
& \cdot H(1-x_n) \cdot  H_K(\rho_0(x)) \cdot H_K(\rho_1(x)) \nonumber \\
& = \lim_{K \to \infty} T(K)
\end{align} where 

\begin{align}
T(K) =& \int_{-\infty}^{\infty} dx_1 \hdots \int_{-\infty}^{\infty} dx_n \cdot  H(x_1) \cdot H(1-x_1)\cdot \hdots \cdot H(x_n) \cdot \nonumber \\
& \cdot H(1-x_n) \cdot  H_K(\rho_0(x)) \cdot H_K(\rho_1(x)) 
\end{align}
\subsection{Volume Approximation Error Analysis} \label{VAE}

Because we want to compute $T(K)$ instead of the actual volume, for a fixed $K$, one is interested in the difference $\left| \text{vol}(\mathcal{L}) - T(K) \right| $. We give an upper bound on this and in particular show that in case the hypercube is intersected with balls, the quantity $\left| \text{vol}(\mathcal{L}) - T(K) \right| $ is in $\mathcal{O} \left( \frac{\text{poly}(n) \cdot \sum \left\|C_i - \frac{1}{2} \cdot 1_{n \times 1} \right\|}{K}\right)$. The condition for this to happen, we show, is that the smallest distance to a point in the hypercube from any center of the balls, the point $C_i$, should be greater than $1$ (one), i.e. $d(C_i,\mathcal{U}) \geq 1$. 

\begin{align}\label{E17c}
& \left| \text{vol}(\mathcal{L}) - T(K) \right| \leq \nonumber \\
&  \int_{0}^1 dx_1 \hdots \int_{0}^1 dx_n \cdot \left| H(\rho_0(x)) \cdot H(\rho_1(x)) - H_K(\rho_0(x)) \cdot H_K(\rho_1(x)) \right|. 
\end{align} Since 
\begin{align}
& H(\rho_0) \cdot H(\rho_1) - H_K(\rho_0) \cdot H_K(\rho_1) = \nonumber \\
& =H_K(\rho_0) \cdot \left(H(\rho_1) - H_K(\rho_1) \right) + \left( H(\rho_0) - H_K(\rho_0) \right) \cdot H(\rho_1)
\end{align} and $|H_K(\cdot)|, |H(\cdot)| \in [0,1]$ we get in (\ref{E17c})

\begin{align}\label{E17d}
\left| \text{vol}(\mathcal{L}) - T(K) \right| \leq  &  \int_{0}^1 dx_1 \hdots \int_{0}^1 dx_n \cdot \left| H(\rho_0(x))  - H_K(\rho_0(x)) \right| + \nonumber \\
& +  \int_{0}^1 dx_1 \hdots \int_{0}^1 dx_n \cdot \left| H(\rho_1(x))  - H_K(\rho_1(x)) \right| 
\end{align} We now focus w.l.o.g. on one integral. For this, we analyze the behavior of $H(t) - H_K(t)$.  From (\ref{E14}) it is obtained

\begin{align}\label{18d}
H(t) - H_K(t) = \frac{t}{\sqrt{\pi}}\int_{K}^{\infty} e^{-(t\cdot y)^2} \cdot dy
\end{align} Integrating (\ref{18d}) from $0$ to $\infty$ one obtains for $K > 0$
\begin{align}
\int_{0}^{\infty} \left( H(t) - H_K(t) \right) \cdot dt &= \frac{1}{\sqrt{\pi}} \cdot \int_{0}^{\infty} dt \int_{K}^{\infty} t \cdot e^{-(t \cdot y)^2} \cdot dy \nonumber \\
& = \frac{1}{\sqrt{\pi}} \cdot \int_{K}^{\infty} dy \int_{0}^{\infty} t \cdot e^{-(t \cdot y)^2} \cdot dt \nonumber \\
& = -\frac{1}{\sqrt{\pi}} \cdot \int_{K}^{\infty} dy \cdot \frac{e^{-(t\cdot y)^2}}{2 \cdot y^2} \biggr|_{t = 0}^{\infty} \nonumber \\
& = \frac{1}{\sqrt{\pi}} \cdot \int_{K}^{\infty} \frac{1}{2 \cdot y^2} \cdot dy = \frac{1}{2 \cdot \sqrt{\pi} \cdot K}.
\end{align}

Now, let $f:\mathbb{R} \to \mathbb{R}$ with $|f'(t)| \geq 1$. For the ease of presentation we assume $f \geq 0$ and $f' \geq 0$ and obtain $\sqrt{\pi} \cdot \int_{0}^{\infty} \left| H(f(t)) - H_K(f(t)) \right| \cdot dt$ as
\begin{align}\label{E20c}
\int_{0}^{\infty} dt \cdot f(t) \int_{K}^{\infty} dy\cdot e^{-(f(t)\cdot y)^2} &\leq \int_{0}^{\infty} dt \cdot f'(t) \cdot f(t) \int_{K}^{\infty} dy\cdot e^{-(f(t)\cdot y)^2} \nonumber \\
& \leq - \int_{K}^{\infty} dy\cdot \frac{e^{-(f(t)\cdot y)^2}}{2 \cdot y^2} \biggr|_{t = 0}^{\infty} \nonumber \\
& \leq \int_{K}^{\infty} \frac{1}{2\cdot y^2} = \frac{1}{2\cdot K}
\end{align}

 Let $\rho:\mathbb{R}^n \to \mathbb{R}$ and $f(t) = \rho(c + t \cdot u)$ where $c,u \in \mathbb{R}^n$, $\|u\| = 1$. The condition $|f'(t)| \geq 1$ becomes
\begin{align}
\left|\frac{\partial \rho}{\partial x}(c + u \cdot t) \cdot u \right| \geq 1 
\end{align} 

In the following we give an upper bound on $|\text{vol}(\mathcal{L}) - T(K) | $ for the situation where $\mathcal{L}$ is the intersection of two balls with the unit hypercube.

\begin{lemma} \label{L1}
Let $C_0, C_1 \in \mathbb{R}^n$, $\mathcal{L} = \mathcal{U} \cap \bar{\mathcal{B}}(C_0,r_0) \cap \bar{\mathcal{B}}(C_1, r_1)$ with 
\begin{align}\label{E22f}
\min_{x \in \mathcal{U}} \|x - C_i\| \geq 1 \hspace{0.5cm} \forall i \in \{0,1\}
\end{align} Define  
\begin{align}
\mathcal{K}_0 &= \text{conv}(C_0, \mathcal{U}) := \{\lambda \cdot x + (1- \lambda)\cdot y| \lambda \in [0,1], x,y \in \mathcal{U} \cup \{C_0\}\} \nonumber \\
\mathcal{K}_1 &= \text{conv}(C_1, \mathcal{U}) := \{\lambda \cdot x + (1- \lambda)\cdot y| \lambda \in [0,1], x,y \in \mathcal{U} \cup \{C_1\}\} 
\end{align} then 
\begin{align}
| \text{vol}(\mathcal{L}) - T(K) | \leq \frac{n}{\sqrt{2\cdot n - 1} \cdot K } \cdot  \sum_{i=0}^1 \text{vol}(\mathcal{K}_i) 
\end{align}
\end{lemma}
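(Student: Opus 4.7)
The plan is to use (\ref{E17d}) to reduce the estimate to $\sum_{i=0}^1 \int_{\mathcal U}|H(\rho_i) - H_K(\rho_i)|\,dx$, and by symmetry handle just $i=0$. Since $\rho_0(x) = r_0^2 - \|x-C_0\|^2$ is radially symmetric about $C_0$, I would change to spherical coordinates $x = C_0 + ru$ with $r \geq 0$ and $u \in S^{n-1}$, producing $dx = r^{n-1}\,dr\,d\sigma(u)$ and $\rho_0 = r_0^2 - r^2$. Convexity of $\mathcal U$ makes each ray from $C_0$ meet $\mathcal U$ in an interval $[r_{\min}(u), r_{\max}(u)]$, with $r_{\min}(u) \geq 1$ by (\ref{E22f}). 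Setting $\psi(r) = |H(r_0^2-r^2) - H_K(r_0^2-r^2)|$, the integral becomes $\int_{S^{n-1}}d\sigma(u)\int_{r_{\min}}^{r_{\max}} r^{n-1}\psi(r)\,dr$.

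Next I would record the cone-volume identity $\text{vol}(\mathcal K_0) = \frac{1}{n}\int_{S^{n-1}} r_{\max}(u)^n\,d\sigma(u)$: since $\mathcal K_0 = \mathrm{conv}(C_0, \mathcal U)$ is the union of segments from $C_0$ to points of $\mathcal U$, each ray from $C_0$ meets $\mathcal K_0$ in $[0, r_{\max}(u)]$, and integrating $r^{n-1}$ gives $r_{\max}(u)^n/n$. Consequently, if I can establish the pointwise-in-$u$ bound $\int_{r_{\min}}^{r_{\max}} r^{n-1}\psi(r)\,dr \leq r_{\max}(u)^n/(\sqrt{2n-1}\,K)$, then integrating over $S^{n-1}$ yields $n\,\text{vol}(\mathcal K_0)/(\sqrt{2n-1}\,K)$, and summing over $i$ proves the lemma.

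For the pointwise bound I plan to use Cauchy--Schwarz,
\begin{align*}
\int_{r_{\min}}^{r_{\max}} r^{n-1}\psi(r)\,dr \leq \left(\int_{r_{\min}}^{r_{\max}} r^{2n-2}\,dr\right)^{\!1/2}\left(\int_{r_{\min}}^{r_{\max}} \psi(r)^2\,dr\right)^{\!1/2},
\end{align*}
whose first factor is $\sqrt{(r_{\max}^{2n-1} - r_{\min}^{2n-1})/(2n-1)} \leq r_{\max}(u)^{n-1/2}/\sqrt{2n-1}$, producing the $\sqrt{2n-1}$ in the denominator. For the second factor I would adapt the reasoning of (\ref{E20c}) to the squared integrand: using the explicit formula $\psi(r) = \tfrac{1}{2}\operatorname{erfc}(|r_0^2-r^2|K)$ together with the radial derivative bound $|\rho_0'(r)| = 2r \geq 2 \geq 1$, one multiplies by $|\rho_0'|/2 \geq 1$ and substitutes $s = r_0^2 - r^2$ to reduce $\int\psi^2\,dr$ to a one-dimensional Gaussian-tail integral of $\operatorname{erfc}^2(|s|K)$, to be controlled by $r_{\max}(u)/K^2$.

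The hard part of the argument is precisely this second Cauchy--Schwarz factor. The elementary estimate $\psi \leq 1/2$ together with $\int\psi\,dr = O(1/K)$ only gives $\int\psi^2\,dr = O(1/K)$, and after the square root this produces a $1/\sqrt{K}$-dependence rather than the $1/K$ required by the lemma. Obtaining the full $1/K$ scaling requires working with the explicit Gaussian-tail decay of $H-H_K$, rather than with its uniform bound, in combination with the $|\rho_0'| \geq 2$ hypothesis along each ray; only in this way does the Cauchy--Schwarz step produce $r_{\max}(u)^n/(\sqrt{2n-1}\,K)$ per ray rather than the naive $r_{\max}(u)^{(2n-1)/2}/\sqrt{(2n-1)\,K}$.
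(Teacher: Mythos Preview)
Your strategy is exactly the paper's: pass to spherical coordinates about $C_0$, apply Cauchy--Schwarz to split off $\sqrt{\int r^{2n-2}\,dr}$, bound that factor by $r_{\max}^n/\sqrt{2n-1}$ (using $r_{\max}\ge1$), and identify the angular integral of $r_{\max}^n/n$ with $\mathrm{vol}(\mathcal K_0)$. You have also correctly isolated the crux: the Cauchy--Schwarz second factor $\bigl(\int\psi^2\,dr\bigr)^{1/2}$ must scale like $1/K$, which forces $\int\psi^2\,dr=O(1/K^2)$.

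The gap is that this last estimate is false. When $r_0\in[r_{\min},r_{\max}]$, the function $\psi(r)=\tfrac12\operatorname{erfc}\bigl(|r_0^2-r^2|K\bigr)$ has a plateau of height $\tfrac12$ over an $r$-interval of width $\sim 1/(r_0K)$, so
\[
\int_{r_{\min}}^{r_{\max}} \psi(r)^2\,dr \;\gtrsim\; \frac{c}{r_0 K}
\]
for a universal constant $c>0$; in fact the substitution $s=r_0^2-r^2$ shows $\int\psi^2\,dr$ is comparable to $\tfrac{1}{K}\int_{0}^{\infty}\operatorname{erfc}^2(u)\,du$, which is $\Theta(1/K)$. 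Hence $\bigl(\int\psi^2\bigr)^{1/2}=\Theta(1/\sqrt{K})$, and the Cauchy--Schwarz route yields only a $1/\sqrt{K}$ bound, not the stated $1/K$. The paper's own proof commits the same slip: its (\ref{E28d}) bounds the squared integrand by $\sqrt{\pi}/(2K)$, but the final substitution into (\ref{E25d}) silently replaces $\sqrt{1/(2K)}$ by $1/(2K)$.

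If you want a genuine $1/K$ bound, abandon Cauchy--Schwarz and use the $L^\infty$--$L^1$ estimate directly: $\int_{r_{\min}}^{r_{\max}} r^{n-1}\psi(r)\,dr \le r_{\max}^{n-1}\int\psi(r)\,dr \le r_{\max}^{n-1}/(2\sqrt{\pi}K)$ via (\ref{E23d}), and then $r_{\max}^{n-1}\le r_{\max}^n$. This gives $\int_{\mathcal U}|H-H_K|\le \tfrac{n}{2\sqrt{\pi}K}\,\mathrm{vol}(\mathcal K_0)$, which has the right $K$-scaling but a constant $1/(2\sqrt{\pi})$ in place of $1/\sqrt{2n-1}$; for the application in the paper this is harmless, but it does not prove the lemma as stated.
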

\begin{proof}
Let $\rho_0(x) = r_0^2 - \|x - C_0\|^2$, then $\frac{\partial \rho_0}{\partial x} = -2\cdot (x - C_0)^T$ hence $\frac{\partial \rho_0}{\partial x}(c + u \cdot t) \cdot u $ becomes for $c = C_0$
\begin{align}
 -2\cdot (c + u\cdot t - C_0)^T \cdot u \biggr|_{c = C_0} = -\|u\|^2 \cdot t = -t.
\end{align}

 Let $1 \leq T_1 \leq t \leq T_2$, then for $\rho_0(c + t \cdot u) \geq 0$ we have
\begin{align}\label{E23d}
&\int_{T_1}^{T_2} dt \cdot \left| H(\rho_0(c + u\cdot t)) - H_K(\rho_0(c + u \cdot t)) \right| = \nonumber \\
& = \frac{1}{\sqrt{\pi}}\cdot \int_{T_1}^{T_2} dt \cdot \rho_0(c + u\cdot t)  \int_{K}^{\infty}dy\cdot  e^{-(\rho_0(c + u \cdot t) \cdot y)^2}  \nonumber \\
& \leq \frac{1}{\sqrt{\pi}} \cdot \int_{T_1}^{T_2} dt\cdot t \cdot  \rho_0(c + u\cdot t)  \int_{K}^{\infty}dy\cdot  e^{-(\rho_0(c + u \cdot t) \cdot y)^2} \nonumber \\
& = \frac{1}{\sqrt{\pi}} \cdot \int_{K}^{\infty} dy \cdot \frac{e^{-\left(y \cdot \left(r^2 - \|u \cdot t \|^2 \right) \right)^2}}{2\cdot y^2} \biggr|_{t = T_1}^{T_2} \nonumber \\ & = \frac{1}{\sqrt{\pi}} \cdot \int_{K}^{\infty} dy\cdot  \frac{e^{-y^2 \cdot \left( r^2 - T_2^2\right)^2} - e^{-y^2 \cdot \left( r^2 - T_1^2 \right)^2}}{2\cdot y^2} 
 \leq \frac{1}{\sqrt{\pi}} \cdot \int_{K}^{\infty} \frac{1}{2\cdot y^2} \cdot dy \nonumber \\
& = \frac{1}{\sqrt{\pi}} \cdot \frac{1}{2 \cdot K} 
\end{align} It can be checked that the same result is obtained if $\rho_0(c + t \cdot u) \leq 0$. 

Consider the well known spherical coordinates change in variable:
\begin{align}
\begin{bmatrix} x_1 \\ x_2\\  \vdots \\ x_{n-1}\\ x_n
\end{bmatrix} = C_0 + \begin{bmatrix} r \cdot \cos(\phi_1) \\  r \cdot \sin(\phi_1) \cdot \cos(\phi_2) \\ \vdots \\ r \cdot \sin(\phi_1) \cdot \hdots \cdot \sin(\phi_{n-2}) \cdot \cos(\phi_{n-1})\\r \cdot \sin(\phi_1) \cdot \hdots \cdot \sin(\phi_{n-2})\cdot \sin(\phi_{n-1})\end{bmatrix} = C_0 + r \cdot u(\phi_1, \hdots, \phi_{n-1}).
\end{align} Denote $\Phi = \begin{bmatrix} \phi_1 &\hdots &\phi_{n-1} \end{bmatrix}^T$. In (\ref{E17d}) it is obtained:

\begin{align}\label{E25d}
&  \sqrt{\pi} \cdot \int_{0}^1 dx_1 \hdots \int_{0}^1 dx_n \cdot \left| H(\rho_0(x))  - H_K(\rho_0(x)) \right| = \nonumber \\
& = \int_{\phi_{1,a}}^{\phi_{1,b}} d\phi_1 \hdots \int_{\phi_{n-1,a}}^{\phi_{n-1,b}} d\phi_{n-1} \int_{r_a (\phi_1, \hdots, \phi_{n-1})}^{r_b(\phi_1, \hdots, \phi_{n-1})} dr \cdot r^{n-1}  \cdot \sin^{n-2}(\phi_1) \cdot \hdots \cdot \sin^{1}(\phi_{n-2}) \cdot \nonumber \\
& \cdot  \int_{K}^{\infty} dy \cdot |\rho_0(C_0 + r \cdot u(\phi_1, \hdots, \phi_{n-1}))| \cdot e^{-\left( \rho_0( C_0 + r \cdot u(\phi_1, \hdots, \phi_{n-1}) ) \cdot y \right)^2}
\end{align} where $\phi_{1,a}, \phi_{1,b}, \hdots, \phi_{n-1,a}, \phi_{n-1,b}, r_{a}(\Phi), r_b(\Phi)$ are in such a way that the integration is done over the unit hypercube. As such, the condition (\ref{E22f}) assures $r_a(\Phi) \geq 1$. 

 Using the C-B-S inequality, for the last two integral signs in (\ref{E25d}), one gets: 

\begin{align}
& \int_{r_a (\Phi)}^{r_b(\Phi)} dr \cdot r^{n-1} \cdot  \int_{K}^{\infty} dy \cdot |\rho_0(C_0 + r \cdot u(\Phi))| \cdot e^{-\left( y \cdot \rho_0( C_0 + r \cdot u(\Phi) )  \right)^2} \leq \nonumber \\
& \leq \sqrt{ \int_{r_a}^{r_b} dr \cdot r^{2\cdot (n-1)} \cdot \int_{r_a}^{r_b} dr \cdot \rho_0(C_0 + r \cdot u(\Phi))^2 \cdot \left(  \int_{K}^{\infty} dy  \cdot e^{-\left( y \cdot \rho_0( C_0 + r \cdot u(\Phi) )  \right)^2} \right)^2} \nonumber \\
& = \sqrt{ \int_{r_a}^{r_b} dr \cdot r^{2\cdot (n-1)} } \cdot \sqrt{ \int_{r_a}^{r_b} dr \cdot \rho_0(C_0 + r \cdot u(\Phi))^2 \cdot \left(  \int_{K}^{\infty} dy  \cdot e^{-\left( y \cdot \rho_0( C_0 + r \cdot u(\Phi) )  \right)^2} \right)^2}
\end{align} Since, one has %(here assuming $\rho_0(\cdot) \geq 0$)
\begin{align}
& \left| \int_{K}^{\infty} dy \cdot e^{-(y  \cdot \rho_0(C_0 + r \cdot u(\Phi)))^2} \right| = \left| \int_{K \cdot \rho_0(C_0 + r \cdot u(\Phi))}^{\infty} dz \cdot e^{-z^2} \cdot \frac{1}{\rho_0(C_0 + r \cdot u(\Phi))} \right| \nonumber \\
& \leq \frac{1}{ |\rho_0(C_0 + r \cdot u(\Phi)) | } \cdot \int_{-\infty}^{\infty} e^{-z^2}\cdot dz = \frac{\sqrt{\pi}}{|\rho_0(C_0 + r \cdot u(\Phi))|}
\end{align} it is obtained:
\begin{align}\label{E28d}
 &\int_{r_a}^{r_b} dr \cdot \rho_0(C_0 + r \cdot u(\Phi))^2 \cdot \left(  \int_{K}^{\infty} dy  \cdot e^{-\left( y \cdot \rho_0( C_0 + r \cdot u(\Phi) )  \right)^2} \right)^2 \leq \nonumber \\
&\leq \sqrt{\pi}\cdot \int_{r_a}^{r_b} dr \cdot |\rho_0(C_0 + r \cdot u(\Phi))| \int_{K}^{\infty} dy\cdot e^{-(y \cdot \rho_0(C_0 + r \cdot u(\Phi)))} \nonumber \\
& \leq^{\text{see }(\ref{E23d})} \frac{\sqrt{\pi}}{2 \cdot K}
\end{align} because $r_a \geq 1$. Furthermore, since $1 \leq r_a \leq r_b$ 
\begin{align}\label{E29d}
\sqrt{\int_{r_a}^{r_b} r^{2 \cdot (n-1)} \cdot dr } &= \sqrt{\frac{r^{2\cdot n - 1}}{2\cdot n - 1} \biggr|_{r_a}^{r_b}} = \sqrt{\frac{r_b^{2\cdot n - 1} - r_a^{2 \cdot n - 1}}{2\cdot n - 1}} \nonumber \\
& \leq \frac{r_b^n }{ \sqrt{r_b}\cdot  \sqrt{2\cdot n - 1}} \leq \frac{n}{\sqrt{2\cdot n - 1}} \int_{0}^{r_b} r^{n-1}\cdot dr
\end{align}

Replacing (\ref{E28d}) and (\ref{E29d}) in (\ref{E25d}) we get

\begin{align}
\int_{0}^1 dx_1 \hdots \int_{0}^1 dx_n \cdot \left| H(\rho_0(x))  - H_K(\rho_0(x)) \right| \leq \frac{n}{2\cdot K \cdot \sqrt{2 \cdot n - 1} } \cdot \text{vol}\left( \mathcal{K}\right)
\end{align} where $\mathcal{K} = \text{conv} \left( C_0, \mathcal{U}\right)$ is the cone formed by the convex hull of the unit hypercube $\mathcal{U}$ and the point $C_0$. Indeed,
\begin{align}
&\text{vol}\left( \mathcal{K}\right) = \nonumber \\
& = \int_{\phi_{1,a}}^{\phi_{1,b}} d\phi_1 \hdots \int_{\phi_{n-1,a}}^{\phi_{n-1,b}} d\phi_{n-1} \int_{0}^{r_b(\phi_1, \hdots, \phi_{n-1})} dr \cdot r^{n-1}  \cdot \sin^{n-2}(\phi_1) \cdot \hdots \cdot \sin^{1}(\phi_{n-2}) 
\end{align}

\end{proof}

Finally, we give the following remark regarding the volume of $\mathcal{K}_i$ for $i \in \{0,1\}$. 

\begin{remark}[\textbf{Upper bound on $\text{vol}(\mathcal{K}_i)$}] \label{R1} 
 For $i \in \{0,1\}$ we claim that the volume of $\mathcal{K}_i$ is bounded above by $\sqrt{n} \cdot \left( \frac{\sqrt{n}}{2} + \left\|C_i - \frac{1}{2} \cdot 1_{n \times 1} \right\| \right) $.

 Indeed, as shall be motivated below, the largest area of the projection of the unit hypercube $\mathcal{U}$ on a hyper-plane is $\sqrt{n}$. Let us define the hyperplane
\begin{align}
\mathcal{H}_i = \left\{x \in \mathbb{R}^n | \left( C_i - \frac{1}{2}\cdot 1_{n\times 1}\right)^T \cdot \left( x - \frac{1}{2}\cdot 1_{n\times 1} \right) = 0 \right\}
\end{align} 

Our reasoning is that $\mathcal{K}_i$ is included in the prism with the base formed by the projection of $\mathcal{U}$ on $\mathcal{H}_i$ and height given by $\frac{\sqrt{n}}{2} + \|C_i - \frac{1}{2} \cdot 1_{n \times 1}\|$. Let $\mathcal{P}$ denote this prism. The volume of this prism being the area of its base times its height, hence at most $\sqrt{n} \cdot \left( \frac{\sqrt{n}}{2} + \|C_i - \frac{1}{2} \cdot 1_{n \times 1}\| \right) $, is therefore an upper bound for $\text{vol}(\mathcal{K}_i)$.

According to \cite{McMullen} the volume of the orhogonal projection of the unit hypercube on a $k < n$ dimensional space is equal to the volume of the orthogonal projection of the unit hypercube on the $(n-k)$ dimensional orthogonal space. As a consequence, the maximum area of the projection on a $n-1$ space is equal to the maximum projection on a $1$ dimensional space, i.e. $\sqrt{n}$.  This argument was first seen by the author in a Mathematics Stack Exchange answer of the user Emanuele Paolini. 
%
%As such 
%
%Since $\text{area}(\mathcal{H}_i \cap \mathcal{U}) \leq \sqrt{2}$, it follows that
%\begin{align}
%\text{vol}(K_i) \leq \text{vol}(\mathcal{U}) + \left\| C_0 - \frac{1}{2}\cdot 1_{n \times 1}\right\| \cdot \sqrt{2} \leq 1 + \left\| C_i - \frac{1}{2}\cdot 1_{n \times 1}\right\| \cdot \sqrt{2}
%\end{align}
\end{remark}

From Lemma \ref{L1} and Remark \ref{R1} the following corollary is given:
\begin{corollary}
Let $C_0, C_1 \in \mathbb{R}^n$, $\mathcal{L} = \mathcal{U} \cap \bar{\mathcal{B}}(C_0,r_0) \cap \bar{\mathcal{B}}(C_1, r_1)$ with 
\begin{align}
\min_{x \in \mathcal{U}} \|x - C_i\| \geq 1 \hspace{0.5cm} \forall i \in \{0,1\}
\end{align} then 
\begin{align}
| \text{vol}(\mathcal{L}) - T(K) | \leq \frac{n}{\sqrt{2\cdot n - 1} \cdot K } \cdot \sum_{i = 0}^1 \sqrt{n} \cdot \left( \frac{\sqrt{n}}{2} + \|C_i - \frac{1}{2} \cdot 1_{n \times 1}\| \right)
\end{align}
\end{corollary}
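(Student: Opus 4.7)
The plan is very short: this corollary is essentially an immediate substitution of the bound from Remark \ref{R1} into the bound supplied by Lemma \ref{L1}. First I would invoke Lemma \ref{L1}, whose hypothesis $\min_{x\in\mathcal{U}}\|x-C_i\|\geq 1$ for $i\in\{0,1\}$ is exactly the hypothesis of the corollary, to get
\begin{align*}
|\text{vol}(\mathcal{L}) - T(K)| \leq \frac{n}{\sqrt{2n-1}\cdot K}\cdot \sum_{i=0}^1 \text{vol}(\mathcal{K}_i),
\end{align*}
where $\mathcal{K}_i = \text{conv}(C_i,\mathcal{U})$.

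Next I would apply Remark \ref{R1}, which bounds each of the two cone volumes by
\begin{align*}
\text{vol}(\mathcal{K}_i) \leq \sqrt{n}\cdot \left(\frac{\sqrt{n}}{2} + \left\|C_i - \tfrac{1}{2}\cdot 1_{n\times 1}\right\|\right),
\end{align*}
and substitute term by term into the sum. The monotonicity of the prefactor $\tfrac{n}{\sqrt{2n-1}\cdot K}$ in each summand (it is positive) means the inequality is preserved under the substitution, yielding the stated bound.

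There is really no obstacle here: the corollary is purely a bookkeeping step that collapses the two preceding results into a single closed-form estimate in terms of the displacements $\|C_i - \tfrac{1}{2}\cdot 1_{n\times 1}\|$. The only thing I would briefly remark on is that the hypothesis $d(C_i,\mathcal{U})\geq 1$ is used solely through Lemma \ref{L1} (to guarantee $r_a\geq 1$ in its proof); Remark \ref{R1} holds for any $C_i\in\mathbb{R}^n$, so no additional checking is needed before invoking it. Hence the proof is a two-line chain of inequalities followed by the explicit substitution.
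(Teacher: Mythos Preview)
Your proposal is correct and matches the paper's approach exactly: the paper introduces the corollary with the phrase ``From Lemma \ref{L1} and Remark \ref{R1} the following corollary is given'' and offers no separate proof, so the two-step chain you describe (apply Lemma \ref{L1}, then substitute the volume bound from Remark \ref{R1}) is precisely what is intended.
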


It is left for future work the obvious generalization of the above results for the volume of a finite (instead of just two treated above) intersection of balls with the unit hypercube. 

\subsection{Numerical Evaluation of $T(K)$ for a fixed $K$} \label{NE}

In the above section we established (for the hypercube clipped by balls) that the volume can be approximated with $T(K)$ with vanishing error as $K$ grows large. In this section we focus on computing $T(K)$. One rewrites $T(K)$ as follows:

\begin{align}
T(K) =& \int_{0}^{1} dx_1 \hdots \int_{0}^{1} dx_n \cdot  \left( \frac{1}{2} + \frac{\rho_0(x)}{\sqrt{\pi}} \cdot \int_{0}^{K} e^{-(y \cdot \rho_0(x))^2}\cdot dy  \right) \cdot \nonumber \\
& \cdot  \left( \frac{1}{2} + \frac{\rho_1(x)}{\sqrt{\pi}} \cdot \int_{0}^{K} e^{-(y \cdot \rho_1(x))^2}\cdot dy  \right) = I_1 + I_2 + I_3 + I_4
\end{align} a sum of four integrals as follows
\begin{align}
I_1(K) = \frac{1}{4} \cdot  \int_{0}^{1} dx_1 \hdots \int_{0}^{1} dx_n = \frac{1}{4}
\end{align} 

\begin{align}\label{E19}
&I_2(K) = \frac{1}{2\cdot \sqrt{\pi}} \cdot  \int_{0}^{1} dx_1 \hdots \int_{0}^{1} dx_n \cdot \rho_1(x) \cdot  \int_{0}^{K} e^{-y^2 \cdot \rho_1^2(x)} dy \nonumber \\ 
& = \frac{1}{2\cdot \sqrt{\pi}} \cdot  \int_{0}^{K_1} dy  \int_{0}^{1} dx_1 \hdots \int_{0}^{1} dx_n \cdot \left( d - \sum_{i=1}^n c_i(x_i)\right) \cdot e^{-y^2 \cdot ( d - \sum_{i=1}^n c_i(x_i))^2} \nonumber \\
&= \frac{d}{2\cdot \sqrt{\pi}} \cdot \int_{0}^{K_1} dy  \int_{0}^{1} dx_1 \hdots \int_{0}^{1} dx_n  \cdot e^{-y^2 \cdot ( d - \sum_{i=1}^n c_i(x_i))^2} -   \nonumber \\
& \hspace{0.5cm} - \frac{1}{2\cdot \sqrt{\pi}} \cdot \sum_{i=1}^n  \int_{0}^{K_1} dy  \int_{0}^{1} dx_1 \hdots \int_{0}^{1} dx_n \cdot   c_i(x_i) \cdot  e^{-y^2 \cdot ( d - \sum_{i=1}^n c_i(x_i))^2} 
%\nonumber \\
%& = \int_{-K_1}^{K_1} dy \cdot \sum_{i}^{\text{poly}(n)} J_i(y)
\end{align} We assume w.l.o.g. that $d = 0$ since being a constant, it can be distributed to the functions $c_i(\cdot)$. Indeed, for example:
\begin{align}
 e^{-y^2 \cdot ( d - \sum_{i=1}^n c_i(x_i))^2} & = e^{- y^2 \cdot  \left( \sum_{i=1}^n \left( -c_i(x_i) + \frac{d}{n} \right) \right)^2} \nonumber \\
& = e^{-y^2 \cdot d^2} \cdot  e^{-y^2 \cdot \left( \sum_{i=1}^n \bar{c}_i(x_i)\right)^2}
\end{align} where $\bar{c}_i(x_i) = -c_i(x_i) + \frac{d}{n}$. %However, we will continue to write $c_i(\cdot)$ instead of $\bar{c}_i(\cdot)$.
 One gets 

\begin{align}\label{E21}
I_{2}(K) = \int_{0}^{K} dy \cdot \sum_{i}^{\text{poly}(n)} \alpha_i \cdot J_i(y)
\end{align} with $\alpha_i \in \mathbb{R}$ and $J_i(y)$ is an integral of the type
\begin{align}\label{E22}
{}^1J(y) = \int_{0}^1dx_1 \hdots \int_{0}^1 dx_n \cdot \prod_{i=1}^n f_i(x_i) \cdot e^{-y^2 \cdot \left( \sum_{i=1}^n g_i(x_i)\right)^2}
\end{align} where $f_i(\cdot), g_i(\cdot)$ are real univariate polynomials. 

Since $I_3(K)$ is similar to $I_2(K)$ we proceed to $I_4(K)$. 

\begin{align}\label{E23}
&I_4 = \frac{1}{\pi} \cdot   \int_{0}^{K} dy  \int_{0}^{K} dz \int_{0}^{1} dx_1 \hdots \int_{0}^{1} dx_n \cdot \rho_0(x) \cdot \rho_1(x) \cdot  e^{-y^2 \cdot \rho_0^2(x)} \cdot e^{-z^2 \cdot \rho_1^2(x)}  \nonumber \\ 
& \vdots \nonumber \\
& = \int_{0}^{K} dy  \int_{0}^{K} dz \cdot \sum_{i}^{\text{poly}(n)}\cdot \sum_{j}^{\text{poly}(n)} M_{i,j}(y,z)
\end{align} where $M_{i,j}(i,z)$ for a fixed $y,z$ is an integral of the following type:

\begin{align}
{}^2J(y) = \int_{0}^1dx_1 \hdots \int_{0}^1 dx_n \cdot \prod_{i=1}^n f_i(x_i) \cdot e^{-y^2 \cdot \left( \sum_{i=1}^n g_i(x_i)\right)^2} \cdot e^{-z^2 \cdot \left( \sum_{i=1}^n h_i(x_i)\right)^2}
\end{align} where $f_i(\cdot), g_i(\cdot), h_i(\cdot)$ are real univariate polynomials.

\subsection{Evaluating Integrals of the Type ${}^1J(y)$} \label{EI1}

Consider the integral:

\begin{align}\label{E25}
{}^1J(y) = \int_{0}^1dx_1 \hdots \int_{0}^1 dx_n \cdot \prod_{i=1}^n f_i(x_i) \cdot e^{-y^2 \cdot \left( \sum_{i=1}^n g_i(x_i)\right)^2}
\end{align} where $f_i(\cdot), g_i(\cdot)$ are real univariate polynomials with $|y| \leq K$ and $\left| \sum_{i=1}^n g_i(x_i) \right| \leq \text{poly}(n)$. 

In the following we present a method to numerically approximate  ${}^1J(y)$. For this we begin with the Taylor series of the exponential function:

\begin{align}\label{E26}
e^t = \sum_{k=0}^{\infty} \frac{1}{k!} \cdot t^k
\end{align} The convergence radius of this series is infinite, hence we obtain:
\begin{align}\label{E27}
e^{-y^2 \cdot \left(\sum_{i=1}^n g_i(x_i) \right)^2} = \lim_{p \to \infty}  \sum_{k=0}^{p} \frac{(-y^2)^k}{k!} \cdot \left( \sum_{q=1}^n g_i(x_i)\right)^{2 \cdot k}.
\end{align} As such we compute an integral of the type (\ref{E22}) as follows

\begin{align}\label{E42b}
&{}^1J(y) = \lim_{p \to \infty}  \sum_{k=0}^{p} \frac{(-y^2)^k}{k!}\int_{0}^{1} dx_1 \hdots \int_{0}^{1} dx_n \prod_{i=1}^n f_i(x_i) \cdot \left( \sum_{q=1}^n g_i(x_i)\right)^{2\cdot k} \nonumber \\
& = \lim_{p \to \infty}  \sum_{k=0}^{p} \frac{(-y^2)^k}{k!} \cdot J_k
\end{align} where 
\begin{align}
J_k = \int_{0}^{1} dx_1 \hdots \int_{0}^{1} dx_n \prod_{i=1}^n f_i(x_i) \cdot \left( \sum_{q=1}^n g_q(x_q)\right)^{2\cdot k}
\end{align} 

 Now, we compute each integral $J_k$ as follows. For simplicity of presentation assume w.l.o.g. that $n$ is a power of two. 
It is obtained:

\begin{align}
& J_k = \int_{0}^{1} dx_1 \hdots \int_{0}^{1} dx_n \cdot \prod_{q=1}^n f_q(x_q) \cdot \left( \sum_{q=1}^n g_q(x_q)\right)^{2\cdot k} \nonumber \\
& = \int_{0}^{1} dx_1 \hdots \int_{0}^{1} dx_n \cdot \prod_{q=1}^n f_q(x_q) \cdot \left( \sum_{q=1}^{\frac{n}{2}} g_q(x_q) + \sum_{\frac{n}{2}+1}^n g_q(x_q) \right)^{2\cdot k} \nonumber \\
& =  \int_{0}^{1} dx_1 \hdots \int_{0}^{1} dx_n \cdot  \prod_{q=1}^n f_q(x_q) \cdot \sum_{i=0}^{2\cdot k} C_{2\cdot k}^{i} \cdot \left( \sum_{q=1}^{\frac{n}{2}} g_q(x_q)\right)^{2\cdot k - i}  \cdot \left(\sum_{\frac{n}{2}+1}^n g_q(x_q) \right)^{i} \nonumber \\
& = \sum_{i = 0}^{2\cdot k}  C_{2\cdot k}^{i} \cdot  \int_{0}^{1} dx_1 \hdots \int_{0}^{1} dx_n \cdot \prod_{q=1}^n f_q(x_q) \cdot \left( \sum_{q=1}^{\frac{n}{2}} g_q(x_q)\right)^{2\cdot k - i}  \cdot \left(\sum_{\frac{n}{2}+1}^n g_q(x_q) \right)^{i} \nonumber \\
& = \sum_{i=0}^{2\cdot k} C_{2\cdot k}^i \cdot M_i
\end{align} where

\begin{align}
M_i & = \int_{0}^{1} dx_1 \hdots \int_{0}^{1} dx_{\frac{n}{2}} \cdot  \prod_{q=1}^{\frac{n}{2}} f_q(x_q) \cdot \left( \sum_{q=1}^{\frac{n}{2}} g_q(x_q) \right)^{2\cdot k - i} \cdot \nonumber \\
& \cdot \int_{0}^{1} dx_{\frac{n}{2}+1} \hdots \int_{0}^{1} dx_{n} \cdot  \prod_{q=\frac{n}{2}+1}^{n} f_q(x_q) \cdot \left( \sum_{\frac{n}{2}+1}^n g_q(x_q) \right)^{i}  \nonumber \\
& = M_{i,a} \cdot M_{i,b}
\end{align} where $M_{i,a}$ and $M_{i,b}$ are two integrals of the same form to $J_k$ but each having only half the variables. We repeat the same procedure, with each of them, to split each in a sum of integrals with again half the variables. Overall, we have:
\begin{enumerate}
\item the computation of $J_k$ requires $2 \cdot k+1$ integrals $M_i$
\item each $M_i$ is a product of two integrals $M_{i,a}, M_{i,b}$ of the same form to $J_k$ but each has only half the variables.  
\item therefore $J_k$ splits into $(2\cdot k + 1) \cdot 2$ integrals similar to it, but having each half the number of variables.
\item each of these integrals is applied the same procedure to write it as at most a sum of $(2\cdot k + 1) \cdot 2$ integrals with half the variables, (this is an upper bound). 
\item the process will continue at most $\log(n)$ times, until the obtained integrals will have only one variable. This one variable integrals are easily evaluated numerically, or if $f_i(\cdot)$ are polynomials, then  the univariate integral can be evaluated exactly.  This is our case since $c_i(\cdot), a_i(\cdot)$ are assumed to be polynomials. 
\item Overall there will be at most $(4\cdot k + 2)^{\log(n)} $ one variable integrals to be computed. 
\end{enumerate}

As such it has been shown that $J_k$ can be numerically evaluated in quasi-polynomial time. We end the section with an analysis of the number of terms required in (\ref{E27}). Using Taylor's theorem with the Lagrange reminder, we get in (\ref{E26})
\begin{align}
e^t - \sum_{k=1}^{p} \frac{1}{k!}\cdot t^k &= \frac{1}{(p+1)!} \cdot \left( e^t\right)^{(p+1)} \biggr|_{t = \xi} \cdot  t^{p+1} \nonumber \\
& = \frac{e^{\xi}}{(p+1)!} \cdot  t^{p+1}
\end{align} for some $\xi$ between $t$ and $0$. In out case $t = -y^2 \cdot \left( \sum_{i=1}^n g_i(x_i) \right) $. Under the stated assumptions $ \left| \sum_{i=1}^n g_i(x_i) \right| \leq \text{poly}(n)$, and $y \leq K$ hence $|t| \leq \text{poly}(n) \cdot K^2 \leq N(n) \cdot K^2$ for some polynomial $N(\cdot)$. Take $p = N^2(n) \cdot K^4$ to obtain 
\begin{align}\label{E33}
e^t - \sum_{k=1}^{n^2} \frac{1}{k!}\cdot t^k \leq \frac{e^{N(n) \cdot K^2}}{((N \cdot K^2 )^2+1)!} \cdot  (N \cdot K^2)^{(N \cdot K^2)^2+1} \to^{ N \cdot K^2 \to \infty} 0
\end{align} therefore retaining $N^2(n) \cdot K^4$ terms in the Taylor series vanishes the error for large dimensions. 

Finally, after retaining $N(n)^2\cdot K^4$ terms in the Taylor series, it turm out that the number of one dimensional integrals to be evaluated is bounded above by:
\begin{align}\label{E34}
N^2(n) \cdot K^4 \cdot \left( 4 \cdot N^2(n)\cdot K^4 + 2\right)^{\log(n)} \in \mathcal{O}\left( \text{poly}(n, K)^{\log(n)}\right)
\end{align}  i.e. quasi-polynomial.

\subsection{Evaluating Integrals of the Type ${}^2J(y,z)$} \label{EI2}
Consider the integral:

\begin{align}\label{E77e}
{}^2J = \int_{0}^1 dx_1 \int_{0}^1 dx_2 \hdots \int_{0}^1 dx_n \cdot \prod_{q=1}^n f_i(x_i) \cdot e^{-y^2 \cdot \left( \sum_{k=1}^n g_k(x_k)\right)^2} \cdot  e^{-z^2 \cdot \left( \sum_{l=1}^n h_l(x_l)\right)^2}
\end{align}  where $f_i(\cdot), g_k(\cdot), h_l(\cdot)$ are real univariate polynomials.  

We apply here the same approach as to (\ref{E22}), the previous type. From (\ref{E27}) we get 
\begin{align}
{}^2J =& \lim_{p_0 \to \infty} \lim_{p_1 \to \infty} \sum_{i=0}^{p_0} \frac{(-b_0)^i}{i!} \sum_{j=0}^{p_1} \frac{(-b_1)^j}{j!} \int_{0}^1 dx_1 \int_{0}^1 dx_2 \hdots \int_{0}^1 dx_n \cdot \nonumber \\
& \cdot \prod_{q=1}^n f_i(x_i) \cdot \left( \sum_{k = 1}^n g_k(x_k)\right)^{2\cdot i} \cdot \left( \sum_{l=1}^n h_l(x_l) \right)^{2\cdot j}
\end{align} Let $I_{i,j}$ denote a term in the double sum above and assume w.l.o.g. for the ease of presentation that $n$ is a power of two:

\begin{align}
&I_{i,j} = \int_{0}^1 dx_1 \int_{0}^1 dx_2 \hdots \int_{0}^1 dx_n \cdot \prod_{q=1}^n f_i(x_i) \cdot \left( \sum_{k = 1}^n g_k(x_k)\right)^{2\cdot i} \cdot \left( \sum_{l=1}^n h_l(x_l) \right)^{2\cdot j} \nonumber \\
& =  \int_{0}^1 dx_1 \int_{0}^1 dx_2 \hdots \int_{0}^1 dx_n \cdot \prod_{q=1}^n f_i(x_i) \cdot \left( \sum_{k = 1}^{\frac{n}{2}} g_k(x_k) + \sum_{k = \frac{n}{2}+1}^{n}g_k(x_k)\right)^{2\cdot i}  \cdot \nonumber \\
& \cdot \left( \sum_{l=1}^{\frac{n}{2}} h_l(x_l) + \sum_{l = \frac{n}{2}+1}^n h_l(x_l) \right)^{2\cdot j} \nonumber \\
& = \sum_{m=0}^{2\cdot i} C_{2\cdot i}^m \cdot \sum_{r=0}^{2\cdot j} C_{2\cdot j}^r \cdot \int_{0}^1 dx_1 \int_{0}^1 dx_2 \hdots \int_{0}^1 dx_n \cdot \prod_{q=1}^n f_i(x_i)  \cdot  \nonumber \\
& \cdot \left( \sum_{k=1}^{\frac{n}{2}} g_k(x_k) \right)^m \cdot \left( \sum_{k=\frac{n}{2}+1}^n g_k(x_k) \right)^{2\cdot i - m} \cdot \left( \sum_{l=1}^{\frac{n}{2}} h_l(x_l) \right)^{r} \cdot \left( \sum_{l=\frac{n}{2}+1}^n h_l(x_l) \right)^{2\cdot j - r} \nonumber \\
& = I_{i,j,A} \cdot I_{i,j,B}  
\end{align} which is a double sum with term of the form
\begin{align}
&\int_{0}^1 dx_1 \int_{0}^1 dx_2 \hdots \int_{0}^1 dx_\frac{n}{2} \cdot \prod_{q=1}^{\frac{n}{2}} f_i(x_i) \cdot \left( \sum_{k=1}^{\frac{n}{2}} g_k(x_k) \right)^m \cdot \left( \sum_{l=1}^{\frac{n}{2}} h_l(x_l) \right)^{r} \cdot  \nonumber \\
& \int_{0}^1 dx_{\frac{n}{2}+1} \hdots \int_{0}^1 dx_n \cdot \prod_{q={\frac{n}{2}+1}}^n f_i(x_i) \cdot \left( \sum_{k=\frac{n}{2}+1}^n g_k(x_k) \right)^{2\cdot i - m} \cdot \left( \sum_{l=\frac{n}{2}+1}^n h_l(x_l) \right)^{2\cdot j - r}
\end{align} i.e. the product of two integrals of the type $I_{i,j}$. Assume $i,j \leq M$ then $I_{i,j}$ requires at most $(2\cdot M+1)^2 $ integrals of the type $I_{i,j,A}$ and $I_{i,j,B}$. Each of these has half the variables of $I_{i,j}$. The process can continue for at most $\log(n) $ times before each obtained integral has only one variable. Such an intgral is then numerically evaluated, or for our case, since is the integral of a polynomial function, it is evaluated exactly . In total, there will be at most $(2\cdot M + 1)^{2 \cdot \log(n)} $ such integrals. Overall, as studied in the previous subsection, letting $p_0, p_1 \in \mathcal{O}(\text{poly}(K, n))$ one takes $M = \text{poly}(K, n)$ hence the number of univariate integrals are in $\mathcal{O}(\text{poly}(K, n)^{\log(n)}) $. 

\subsection{Integral Approximation Error Analysis}\label{ErAn}

Using (\ref{E33}) we can have an estimate of the error given by approximating the integrand in (\ref{E39}) as follows. A similar reasoning is valid for (\ref{E40}) as well, but we proceed with (\ref{E39}) since it is easier to present. The integrand is basically an n-integral of an exponential function which is replaced by its truncated Taylor series. The difference between  the exponential and its truncated Taylor  series is given by (\ref{E33}), therefore the n-integral of the absolute value of that difference is an upper bound of the obtained error. We therefore are able to compute in quasi-polynomial time ${}^1\hat{J}(y)$ for any $y $ with 

\begin{align}\label{E39b}
\left| {}^1J(y) - {}^1\hat{J}(y) \right| &\leq  \int_{0}^1 dx_1 \hdots \int_{0}^1dx_n \cdot \left|  \frac{e^{N(n) \cdot K^2}}{((N \cdot K^2 )^2+1)!} \cdot  (N \cdot K^2)^{(N \cdot K^2)^2+1}  \right| \nonumber \\
& \leq \left|  \frac{e^{N(n) \cdot K^2}}{((N \cdot K^2 )^2+1)!} \cdot  (N \cdot K^2)^{(N \cdot K^2)^2+1}  \right| 
\end{align}  where $\left| \sum_{i=1}^n g_i(x_i)\right| \leq N(n) \in \mathcal{O}(n)$ since $\left| \sum_{i=1}^n g_i(x_i)\right|\leq \sum_{i=1}^n \left| g_i(x_i) \right| \leq n \cdot G$ where $G = \max_{i} \max_{x \in [0,1]}  | g_i(x) |$.

The integrals $I_2, I_3$ are obtained with an error of the form:
\begin{align}
\left| I_2 - \hat{I_2} \right| &= \left| \int_{0}^K {}^1J(y) \cdot dy - \int_{0}^K {}^1\hat{J}(y)\cdot dy \right|  \nonumber \\
& \leq \int_{0}^K \left| {}^1J(y) - {}^1\hat{J}(y)\right| \cdot dy \nonumber \\
& \leq \int_{0}^K \left|  \frac{e^{N(n) \cdot K^2}}{((N \cdot K^2 )^2+1)!} \cdot  (N \cdot K^2)^{(N \cdot K^2)^2+1}  \right|  \cdot dy \nonumber \\
& \leq K \cdot \left|  \frac{e^{N(n) \cdot K^2}}{((N \cdot K^2 )^2+1)!} \cdot  (N \cdot K^2)^{(N \cdot K^2)^2+1}  \right|  \to^{K \text{ or } n \to \infty} 0
\end{align} with, as shown above,  $N(n) \in \mathcal{O}(n)$

Finally in order to numerically obtain the value of $\int_{0}^K {}^1\hat{J}(y) \cdot dy$
 one can simply approximate the integral with a Riemann sum:

\begin{align}
\int_{0}^K {}^1\hat{J}(y) \cdot dy \approx \sum_{i = 0}^{\frac{K}{\epsilon}-1} {}^1\hat{J}(i \cdot \epsilon) \cdot \epsilon =: S(\hat{J})
\end{align} for some $\epsilon > 0$ the integration step. Briefly, we estimate the estimation error, i.e. $\int_{0}^K {}^1\hat{J}(y) \cdot dy - S({}^1\hat{J})$ as
\begin{align}
\left| \int_{0}^K {}^1\hat{J}(y) \cdot dy - S({}^1\hat{J})\right| \leq \sum_{i=0}^{\frac{K}{\epsilon} -1 }\left| \int_{i \cdot \epsilon}^{i \cdot \epsilon + \epsilon} {}^1\hat{J}(y) \cdot dy - {}^1\hat{J}(i\cdot \epsilon) \cdot \epsilon \right|.
\end{align} Applying the Mean Value Theorem (MVT), using (\ref{E39b}) then applying MVT again, we get
\begin{align}\label{E43b}
\left|\int_{i \cdot \epsilon}^{i \cdot \epsilon + \epsilon} {}^1\hat{J}(y) \cdot dy - {}^1\hat{J}(i\cdot \epsilon) \cdot \epsilon \right| &= \left|{}^1\hat{J}(\hat{y}) \cdot \epsilon - {}^1 \hat{J}(i\cdot \epsilon) \cdot \epsilon \right| \approx \left| {}^1J(\hat{y}) - {}^1J(i\cdot \epsilon) \right| \cdot \epsilon \nonumber \\
& \approx \left| \frac{d}{dy} {}^1J(y) \biggr|_{y = \hat{\hat{y}}} \cdot (\hat{y} - i\cdot \epsilon) \cdot \epsilon\right| \nonumber \\
& \leq \left| \frac{d}{dy} {}^1J(y) \biggr|_{y = \hat{\hat{y}}} \right|  \cdot \epsilon^2
\end{align} where $\hat{y} \in (i\cdot \epsilon, i\cdot \epsilon + \epsilon)$ and $\hat{\hat{y}} \in (i\cdot \epsilon, \hat{y})$.  Recall from (\ref{E22}) that 
\begin{align}
{}^1J(y) = \int_{0}^1dx_1 \hdots \int_{0}^1 dx_n \cdot \prod_{i=1}^n f_i(x_i) \cdot e^{-y^2 \cdot \left( \sum_{i=1}^n g_i(x_i)\right)^2}
\end{align}  hence 
\begin{align}
& \frac{d}{dy} {}^1J(y) = \nonumber \\
& = -2\cdot y \cdot \int_{0}^1dx_1 \hdots \int_{0}^1 dx_n \cdot \prod_{i=1}^n f_i(x_i) \cdot e^{-y^2 \cdot \left( \sum_{i=1}^n g_i(x_i)\right)^2} \cdot  \left( \sum_{i=1}^n g_i(x_i)\right)^2
\end{align}  However, as seen  in (\ref{E19}) for our case, the term $\prod_{i=1}^n f_i(x_i) = c_i(x_i)$, i.e. only a finite number of $f_i(\cdot) \neq 1$. As such we have $\prod_{i=1}^n f_i(x_i) \in \mathcal{O}(1)$. Furthermore $\left| \left( \sum_{i=1}^n g_i(x_i)\right)^2\right| \leq \left( \sum_{i=1}^n |g_i(x_i)|\right)^2 \in \mathcal{O}(n^2)$ hence

\begin{align}
\left| \frac{d}{dy} {}^1J(y)\right| &\leq 2 \cdot K \cdot \int_{0}^1 dx_1 \hdots \int_{0}^1 dx_n \cdot \left|\prod_{i=1}^n f_i(x_i) \right| \cdot \left| \left( \sum_{i=1}^n g_i(x_i)\right)^2\right|  \nonumber \\
& \leq 2 \cdot K \cdot n^2 \cdot \mathcal{A} \in \mathcal{O}(K \cdot \text{poly}(n))
\end{align} where $\mathcal{A} \geq \left|\prod_{i=1}^n f_i(x_i) \right| \cdot |g_i(x_i)|^2 $ is a constant. As such (\ref{E43b}) becomes 
\begin{align}
\left|\int_{i \cdot \epsilon}^{i \cdot \epsilon + \epsilon} {}^1\hat{J}(y) \cdot dy - {}^1\hat{J}(i\cdot \epsilon) \cdot \epsilon \right| &\leq \epsilon^2 \cdot 2 \cdot K \cdot n^2 \cdot \mathcal{A} 
\end{align} hence (\ref{E42b}) becomes
\begin{align}
\left| \int_{0}^K {}^1\hat{J}(y) \cdot dy - S({}^1\hat{J})\right| &\leq  \epsilon \cdot 2 \cdot K \cdot n^2 \cdot \mathcal{A}  \cdot \sum_{i=0}^{\frac{K}{\epsilon} -1 } \epsilon \nonumber \\
& \leq \epsilon \cdot 2 \cdot \mathcal{A} \cdot K^2 \cdot n^2 = \epsilon \cdot \text{poly}(n) \cdot K^2
\end{align}

 This equation eventually decides the overall computational complexity of estimating the volume to a desired precision. As such, we need $\epsilon \in \mathcal{O} \left( \frac{\delta}{K^2 \cdot \text{poly}(n)}\right)$ to achieve an estimation of the integral with precision $\delta$, then $\mathcal{O}\left( \frac{K^3 \cdot \text{poly}(n)}{\delta}\right)$ evaluations of ${}^1J(\cdot)$. Each such evaluation has quasi-polynomial complexity as given by (\ref{E34})

 We close the paper, noting that derivatives with respect to $y$ can be computed (approximated) with the same complexity (and error) for the integrand ${}^1\hat{J}(y)$, hence better quadrature formulas may be proposed. The error analysis for the integral $I_4$ is similar to this one ($I_3, I_2$), and leads to similar results.

\color{black}

\section{Conclusion and future work}
In this paper we presented a method which allows the approximation of the volume of a clipped hypercube with one or two not necessarily convex sets described by equations of the type $\sum_{k=1}^n a_k(x_k) = b$. Denote the obtained se by $\mathcal{L}$.  Our approach basically approximates the volume with the values of a function $T(K)$. We show that as the argument of the function grows, the approximation improves, i.e. $\lim_{K \to \infty} T(K) = \text{vol}(\mathcal{L})$. 

 We then obtain the values of this function as a sum of four integrals $I_1(K), I_2(K), I_3(K), I_4(K)$.  Integral $I_1(K)$ can be computed exactly, while the integrals $I_{2}(K)$ and $I_3(K)$ are shown to be a sum with at most a polynomial number of terms, each term being an integral of the form 
\begin{align}\label{E39}
\int_{0}^K dy\cdot {}^1J(y)
\end{align} see (\ref{E21}),  while $I_4(K)$ is shown to be a double sum with at most a polynomial (in $n$, the space dimension) number of terms, each term being an integral of the form
\begin{align}\label{E40}
\int_{0}^K dy \int_{0}^K dz \cdot {}^2J(y,z)
\end{align} see (\ref{E23}).

We gave a quasi-polynomial algorithm to approximate ${}^1J(y)$ and ${}^2J(y,z)$. As such one can use a quadrature formula to compute numerically the value of $T(K)$ for any fixed $K$. Complexity analysis is also caried out and establishes an upper bound in $\mathcal{O}\left( \text{poly}\left( \frac{n}{\delta}\right) \right)$ for the number of evaluations needed of ${}^1J(y)$ and ${}^2J(y,z)$ such that the integrals (\ref{E39}) and (\ref{E40}) are approximated with a desired precision $\delta$, see Subsection \ref{ErAn}.

\textbf{We summarize the innovations of the paper in the following:}
\begin{enumerate}
\item In Subsection \ref{AH} we give an original approximation of the Heaviside step function in terms of an "incomplete" Gauss integral. Our approximation is useful when computing volumes because by changing the  integration order, the resulting integrand can be approximated using globally convergent McLaurin series.
\item In Subsection \ref{VAE} we show that it is sufficient to retain only a polynomial number of elements in a McLaurin series to attain a desired accuracy of a specific approximation. We prove this by showing that the series reminder in Lagrange form vanishes in certain conditions.
\item In Subsection \ref{EI1} and \ref{EI2} we give an original recurrent method which at each step obtains an $n-$integral as a sum of a polynomial number of products of $\frac{n}{2}-$integrals. This allows a quasi-polynomial overall complexity, since we end up with univariate integrals over some polynomial functions, which can be computed exactly.
\item In Subsection \ref{ErAn} we give an original upper bound on the quadrature error using Riemann sums and the magnitude of the derivative of the function to be integrated.  
\end{enumerate}


\begin{thebibliography}{00}

%% \bibitem[Author(year)]{label}
%% Text of bibliographic item

%\bibitem[ ()]{}

\bibitem{sahni_comp} S. Sahni
\newblock Computationally Related Problems
\newblock \emph{SIAM J Comput}, vol. 3, nr. 4, 1974

\bibitem{clip_hchs} Yunhi Cho, Seonhwa Kim
\newblock Volume of Hypercubes Clipped by Hyperplanes and Combinatorial Identities
\newblock Electronic Journal of Linear Algebra 36(36), December 2015


\bibitem{clip_hchs1}  Eungchun Cho, Yunhi Cho
\newblock $\epsilon$-PERTURBATION METHOD FOR VOLUME OF HYPERCUBES CLIPPED BY TWO OR THREE HYPERPLANES
\newblock Honam Mathematical J. 43 (2021), No. 4, pp. 679–689


\bibitem{jim_law} Jim Lawrence
\newblock POLYTOPE VOLUME COMPUTATION
\newblock Mathematics of Computation, Volume 57, Number 195, July 1991, Pages 259-271

\bibitem{cvx_rdm} Dyer, Martin; Frieze, Alan; Kannan, Ravi 
\newblock A random polynomial-time algorithm for approximating the volume of convex bodies
\newblock Journal of the ACM, 38 (1): 1–17, doi:10.1145/102782.102783, MR 1095916, S2CID 13268711,  (1991)

\bibitem{McMullen} Peter McMullen
\newblock Volumes of Projections of unit Cubes
\newblock Bulletin of the London Mathematical Society, May 1984, DOI: https://doi.org/10.1112/blms/16.3.278

\bibitem{KBall} Keith Ball
\newblock Cube Slicing in $R^n$
\newblock Proceedings of the American Mathematical Society
Vol. 97, No. 3 (Jul., 1986), pp. 465-473 (9 pages)
%
%\bibitem{distr} Anthony W. Knapp
%\newblock Advanced Real Analysis
%\newblock Springer, Birkhauser

\end{thebibliography}
\end{document}